\newtheorem{theorem}{Theorem}[section]
\newtheorem{proposition}[theorem]{Proposition}
\newtheorem{corollary}[theorem]{Corollary}
\newtheorem{conjecture}[theorem]{Conjecture}
\theoremstyle{definition}
\newtheorem{remark}[theorem]{Remark}
\newtheorem{definition}[theorem]{Definition}
\newtheorem{question}[theorem]{Question}
\tikzstyle{empty}=[circle,draw=black!80,thick]
\tikzstyle{emptyn}=[circle,draw=black!80,fill=white,scale=0.5] 
\tikzstyle{nero}=[circle,draw=black!80,fill=black!80,thick] 
\def\e{\epsilon}
\def\so3{\mathrm{SO}(3)}
\def\fso3{\mathrm{FSO}(3)}
\title{A note on the Brown--Erd\H os--S\'os conjecture in groups}
\author{Jason Long}
\begin{document}

\maketitle
\begin{abstract}
We show that a dense subset of a sufficiently large group multiplication table contains either a large part of the addition table of the integers modulo some $k$, or the entire multiplication table of a certain large abelian group, as a subgrid. As a consequence, we show that triples systems coming from a finite group contain configurations with $t$ triples spanning $\mathcal{O}(\sqrt{t})$ vertices, which is the best possible up to the implied constant. We confirm that for all $t$ we can find a collection of $t$ triples spanning at most $t+3$ vertices, resolving the Brown--Erd\H os--S\'os conjecture in this context. The proof applies well-known arithmetic results including the multidimensional versions of Szemer\'edi's theorem and the density Hales--Jewett theorem.

This result was discovered simultaneously and independently by Nenadov, Sudakov and Tyomkyn~\cite{NST}, and a weaker result avoiding the arithmetic machinery was obtained independently by Wong~\cite{W}.
\end{abstract}

\section{Introduction}

A central open problem in extremal combinatorics is the Brown--Erd\H os--S\'os conjecture~\cite{BES}. We say that a subgraph $H'$ of a hypergraph $H$ is an $(r,s)$-configuration if $|E(H')|=s$ and $|V(H')|\le r$. The Brown--Erd\H os--S\'os conjecture states that, for any fixed positive integer $t\ge 3$, any 3-uniform hypergraph $H$ on $n$ vertices which does not contain a $(t+3,t)$-configuration has at most $o(n^2)$ edges. The number $t+3$ cannot be decreased, since random constructions can achieve $\Omega(n^2)$ edges while avoiding any $(t+2,t)$-configurations~\cite{BES}. The conjecture can be generalised to higher uniformity, but we shall focus on the 3-uniform case in this note.

Since its formulation in 1973 there has been a great deal of work on this problem. Ruzsa and Szemer\'edi~\cite{RS} resolved the first non-trivial case ($t=3$), but the conjecture remains open for all $t>3$. The strongest result to date is due to S\'ark\"ozy and Selkow~\cite{SS}, who showed that any 3-uniform hypergraph which does not contain a $(t+2+\lfloor \log_2 t\rfloor,t)$-configuration has at most $o(n^2)$ edges.

When tackling the Brown--Erd\H os--S\'os conjecture, we may additionally assume that the hypergraph $H$ is linear (as noted in~\cite{S}, for example). It is also clear that we may assume that $H$ is tripartite, since, given a 3-graph $H$, we may obtain a tripartite 3-graph $H'$ by taking three copies of the vertex set of $H$ and placing edges between these partitions corresponding to the edges of $H$. 

Given a linear, tripartite, 3-uniform hypergraph $H$ on $n+n+n$ vertices we can associate a partially labelled $n\times n$ grid by labelling position $(a,b)$ with label $c$ if $(a,b,c)\in E(H)$. Thus the Brown--Erd\H os--S\'os conjecture can be formulated in terms of a quasigroup -- this is noted in~\cite{S} and~\cite{SW}, for example. 

\begin{conjecture}[Brown--Erd\H os--S\'os]\label{BES}
	Fix $t\in\mathbb{Z}^+$ and $\e>0$. Then there exists $N=N(t,\e)$ such that for any quasigroup $G$ of order $n>N$ and any subset $A$ of the multiplication table of $G$ of density at least $\e$, we can find a $(t+3,t)$-configuration in $A$; that is to say, a set of $t$ triples in $A$ spanning at most $t+3$ vertices (i.e. rows, columns or labels).
\end{conjecture}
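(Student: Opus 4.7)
My plan is to attack Conjecture~\ref{BES} through density-Ramsey machinery applied to the Latin-square structure of the quasigroup multiplication table, without any recourse to subgroup or coset arguments that have no analogue in a general quasigroup.

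First, I would recast the problem purely combinatorially: view $A$ as a subset of density at least $\e$ of the $n\times n$ grid, with each cell $(a,b)$ carrying a label $a\cdot b\in[n]$, where the labels form a Latin square. The objective is to produce subsets $R,C,L\subseteq[n]$ with $|R|+|C|+|L|\le t+3$ such that $|\{(a,b)\in A : a\in R,\ b\in C,\ a\cdot b\in L\}|\ge t$. A natural sufficient target is a \emph{sub-Latin-rectangle}: sets $R$ and $C$ with $R\times C\subseteq A$ and label image $L:=\{a\cdot b : a\in R, b\in C\}$ of size comparable to $|R|+|C|$. Taking $|R|=|C|=k$ and $|L|=O(k)$ yields $k^2$ triples on $O(k)$ vertices, which suffices for any $t\le k^2$ once $k\ge 3$.

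Next, I would apply a $3$-graph regularity lemma to $A$ to locate a dense quasirandom block, and then invoke the multidimensional Szemer\'edi theorem on the row/column coordinates to find many aligned $k\times k$ sub-grids contained in $A$. Separately, I would apply the density Hales--Jewett theorem to the label coordinate, viewing labels as words in some alphabet, to locate a combinatorial subspace of cells on which labels cluster in a small set. Superimposing these two sources of structure should produce a sub-Latin-rectangle of the required shape, from which $t$ triples on $t+3$ (in fact $O(\sqrt t)$) vertices are extracted by pigeonhole.

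The central obstacle is that, for an arbitrary quasigroup, the label function $(a,b)\mapsto a\cdot b$ has no a~priori compatibility with any exogenous arithmetic structure on $[n]$. In a group, translating rows and columns corresponds to multiplying labels, so arithmetic progressions in rows and columns automatically yield labels confined to a small set --- this is precisely what enables multidimensional Szemer\'edi and density Hales--Jewett to deliver the configuration. For a general quasigroup, even when one locates a perfect $k\times k$ combinatorial grid inside $A$, its label set can be as large as $k^2$, defeating the support bound. The hard part is therefore the label-clustering step: forcing label concentration from Latin-square density alone is, in full generality, essentially equivalent to the unresolved full Brown--Erd\H os--S\'os conjecture. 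My best hope would be a density-increment scheme on the label coordinate --- if no small $L$ captures many triples in a quasirandom block, then the labels must be so spread out that a double count against the Latin-square row/column constraints forces an increment on a sub-grid on which the Hales--Jewett machinery can then be redeployed --- but making this rigorous without algebraic structure is the crux.
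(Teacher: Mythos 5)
You were handed a statement that the paper itself does not prove: Conjecture~\ref{BES} is the full Brown--Erd\H os--S\'os conjecture, reformulated for quasigroups, and the paper explicitly records that it remains open for all $t>3$ (only the case $t=3$, due to Ruzsa and Szemer\'edi, is known). What the paper actually proves is Conjecture~\ref{BESG}, the restriction to \emph{groups}, via Pyber's theorem to extract a large abelian subgroup, a coset decomposition to pass to a dense subset of an abelian addition table, and then the multidimensional Szemer\'edi theorem or density Hales--Jewett to find a structured subgrid (Theorem~\ref{thm}), followed by the explicit constructions of Section~\ref{opt}. So the correct assessment of your proposal is not that it has a fixable gap, but that it cannot be completed at all with current techniques --- and, to your credit, your final paragraph diagnoses exactly why.

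Your diagnosis matches the reason the paper confines itself to groups. The arithmetic machinery (multidimensional Szemer\'edi, density Hales--Jewett) acts on the row and column coordinates, and it is only the group law that transfers this structure to the labels: in an abelian subgrid the label of $(a+id,\,b+jd)$ is $a+b+(i+j)d$, so a $k\times k$ grid of cells automatically carries only $2k-1$ labels. In a general quasigroup there is no such compatibility, and, as you say, a perfect $k\times k$ combinatorial grid inside $A$ can have $k^2$ distinct labels, which destroys the vertex count. Your proposed remedies do not survive scrutiny either: applying density Hales--Jewett ``to the label coordinate, viewing labels as words in some alphabet'' presupposes an ambient product structure on the label set that a quasigroup simply does not supply, and the density-increment scheme you sketch has no decreasing potential function to drive it, since Latin-square density alone imposes no bound on label dispersion --- indeed forcing label concentration from density alone is, as you note, essentially the full conjecture. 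In short: your proposal is not a proof, but its central negative claim --- that the label-clustering step is the crux and is unavailable without algebraic structure --- is accurate, and it is precisely the boundary between what this paper proves (groups) and what remains open (quasigroups).
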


In light of this formulation, it is natural to ask the same question when $G$ is in fact a group, as the additional structure might provide greater local density than can be found in random constructions.

\begin{conjecture}[Brown--Erd\H os--S\'os for groups]\label{BESG}
	Fix $t\in\mathbb{Z}^+$ and $\e>0$. Then for any sufficiently large group $G$ and any subset $A$ of the multiplication table of $G$ of density at least $\e$, we can find a $(t+3,t)$-configuration in $A$.
\end{conjecture}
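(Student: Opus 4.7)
The strategy is to extract from $A$ a piece that resembles the multiplication table of an abelian group, and then to read off the $(t+3,t)$-configuration by direct counting. The plan thus has three stages: first, find abelian structure in $G$ on which $A$ remains dense; second, apply arithmetic density results to obtain a highly structured subgrid; third, construct the configuration.

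First, I would reduce to an abelian setting. The aim is to locate, as a subgrid of $G$'s multiplication table, a copy of the multiplication table of some large abelian group $H$, and to ensure that $A$ remains $\e$-dense on this copy. Classical group-theoretic results guarantee that every sufficiently large finite group contains an abelian subgroup of arbitrary prescribed size. By passing to a normal abelian subgroup (or by working with the abelianisation $G/[G,G]$), the multiplication table of $G$ decomposes into subgrids indexed by pairs of cosets, each being a shifted copy of the multiplication table of this abelian piece. Averaging the density of $A$ over these subgrids and applying pigeonhole yields a single subgrid on which $A$ restricts to a dense subset $A' \subseteq H \times H$ of its table.

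With $A'$ in hand, I would apply arithmetic combinatorics. If $H$ contains an element of order at least $k$, the induced dense subset of $(\mathbb{Z}/k\mathbb{Z})^2$ contains, by the multidimensional Szemer\'edi theorem, an arithmetic grid $\{(a+id,\, b+jd) : 0 \le i < r,\, 0 \le j < c\}$ for any prescribed $r, c$. If instead the exponent of $H$ is bounded, then $H$ contains a large elementary abelian subgroup, and density Hales--Jewett yields a combinatorial cube in $A'$ of any fixed dimension, which corresponds to the entire multiplication table of some smaller elementary abelian group appearing as a subgrid. By the structure theorem for finite abelian groups, at least one of these cases must apply once $|H|$ is large enough; either outcome matches the dichotomy announced in the abstract.

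Given the structured piece, the construction of the configuration is purely counting. An $r \times c$ arithmetic grid supplies $rc$ triples spanning $r$ rows, $c$ columns, and $r+c-1$ labels (which form an arithmetic progression of common difference $d$), for a total of $2r + 2c - 1 \le rc + 3$ vertices provided $r, c \ge 2$, since $(r-2)(c-2) \ge 0$. For arbitrary $t$, I would pick $m$ with $m^2 \le t \le (m+1)^2$, take an $m \times m$ sub-block inside an $(m+1) \times (m+1)$ arithmetic grid, and supplement with $t - m^2$ further triples drawn from an extra row or column; a short case-check confirms the vertex bound. I expect the main obstacle to be the reduction in the first step: one must set up the averaging so that restricting to the multiplication table of a large abelian subgroup preserves a constant fraction of the density, which requires carefully combining group-theoretic structure (the existence of large abelian subgroups, their normality, and how cosets multiply) with a clean pigeonhole argument. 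After this, steps two and three invoke well-known black-box results and elementary counting, respectively.
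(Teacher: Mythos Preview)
Your three-stage plan matches the paper's strategy almost exactly: Pyber's theorem supplies a large abelian subgroup, an averaging over cosets preserves the density, and then the dichotomy (large cyclic factor versus bounded exponent) is handled by multidimensional Szemer\'edi and density Hales--Jewett respectively, leading to the structured subgrids of Theorem~\ref{thm}.

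Two comments. First, your anticipated ``main obstacle'' in stage one is not one: you do not need normality or the abelianisation. For any subgroup $G'\le G$, the multiplication table $G\times G$ is partitioned by the products $rG'\times G's$ of left cosets with right cosets, and on each such block the map $(rg,\,g's)\mapsto (g,g')$ carries the labelled grid isomorphically to the addition table of $G'$ (the label $rg\cdot g's$ depends only on $g+g'$ up to the fixed translation by $r,s$). Pigeonhole then gives a block on which $A$ has density at least $\e$, with no further group-theoretic input.

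Second, there is a genuine gap in stage three. Your $r\times c$ arithmetic-grid count, with $r+c-1$ labels, only applies in the first branch of the dichotomy, where you have a long arithmetic progression available. In the bounded-exponent branch you land inside the addition table of $\mathbb{Z}_p^m$ for some \emph{fixed small} prime $p$ (possibly $p=2$), and there an arithmetic progression has length at most $p$; your grid gives at most $p^2$ triples, which does not reach an arbitrary $t$. The paper's Proposition~\ref{prop} handles this case separately: one builds the configuration two rows at a time inside successive copies of $\mathbb{Z}_p$, exploiting the wrap-around (the label in position $(p-1,1)$ coincides with that in $(0,0)$) to gain a vertex each time one copy is completed, so that starting a fresh copy costs only one extra vertex net. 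Without some argument of this kind, your proposal does not yet establish $F(t)\le t+3$ in the elementary-abelian case.
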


Since the Brown--Erd\H os--S\'os conjecture is resolved for $t\le 3$, the first interesting case of Conjecture~\ref{BESG} is $t=4$. In 2015, Solymosi~\cite{S} resolved this case, showing that Conjecture~\ref{BESG} holds for $t=4$.

Recently, Solymosi and Wong~\cite{SW} showed that much more is true, proving that the Brown--Erd\H os--S\'os threshold of $t+3$ vertices can in fact be surpassed in the groups setting. In particular, they prove that dense subsets of sufficiently large group multiplication tables contain sets of $t$ triples in $A$ spanning asymptotically only $3t/4$ vertices. Since their result concentrates on the case of large $t$, they do not match Conjecture~\ref{BESG} for small $t$ but prove that it holds for infinitely many $t$.

Given that the Brown--Erd\H os--S\'os threshold can be surpassed in the groups setting, one may ask what the correct behaviour should be in this case. Since $A$ corresponds to a linear hypergraph, we cannot find sets of $t$ triples in $A$ spanning fewer that $\sqrt{t}$ vertices, but can we approach this lower bound?

\begin{question}\label{q1}
	Let $t$ be a fixed positive integer. What is the smallest number $F(t)$ such that we are guaranteed to find an $(F(t),t)$-configuration in a dense subset of a sufficiently large group multiplication table? 	
\end{question}

In this note we answer this question up to a constant factor, and resolve Conjecture~\ref{BESG}. By applying machinery from arithmetic combinatorics, including the multidimensional Szemer\'edi theorem and a multidimensional variant of the density Hales--Jewett theorem, we prove that any dense subset of a sufficiently large group multiplication table contains a large subgrid belonging to one of two families: either the subgrid matches part of the multiplication table of a cyclic group, or the subgrid matches the entire multiplication table of $\mathbb{F}_p^m$ for some small prime $p$ and large $m$. A precise statement appears in Theorem~\ref{thm} following some notation. 

This reduces Question~\ref{q1} to a discrete optimisation problem, in which we must find configurations with $t$ edges spanning few vertices in each of the two cases resulting from our main theorem. We tackle this optimisation problem in Section~\ref{opt}, showing that $F(t)=\mathcal{O}(\sqrt{t})$ and resolving Conjecture~\ref{BESG} for all $t$.

\section{Notation and Statements}\label{stmt}

We write $\mathbb{Z}_n$ for the group of integers modulo $n$ under addition and we write $[k]$ for the set $\{0,1,\dots,k-1\}$. We begin with some definitions.

\begin{definition}
	By the \emph{multiplication table} of a group $G=(G,\circ)$ we mean the collection of triples $(a,b,a\circ b)$ for $a,b\in G$. The \emph{vertex set} will be given by three disjoint copies of $G$ called the \emph{row vertices}, \emph{column vertices} and \emph{label vertices}. We shall refer to the triples as the edges or \emph{faces} of the corresponding tripartite 3-uniform hypergraph. Typically, we will represent this as a labelled grid, with entry $(a,b)$ given label $a\circ b$. In the case that $G=(G,+)$ is an abelian group, we will usually call the multiplication table an \emph{addition table}.
\end{definition}

\begin{definition}
	By a \emph{subgrid} of a labelled grid, we mean the labelled grid contained in the intersection of some subset of the rows and columns.
\end{definition}

\begin{definition}
	We say that a labelled grid $A$ is \emph{isomorphic} to another labelled grid $B$ if we can biject the row sets, column sets and label sets of $A$ and $B$ in such a way that the resulting map is a graph isomorphism between the corresponding 3-graphs.
\end{definition}

Using this notation we reformulate Question~\ref{q1} in a precise way.

\begin{question}\label{q}
	Let $t$ be a fixed positive integer and $\epsilon>0$. Let $F(t)$ be minimal such that, given any subset $A$ of density at least $\epsilon$ of a sufficiently large (in terms of $t$ and $\epsilon$) group multiplication table, we may find an $(F(t),t)$-configuration in $A$. How does $F(t)$ grow with $t$? Is $F(t)\le t+3$ for all $t$?
\end{question}

In order to answer this question, we prove the following structural result.

\begin{theorem}\label{thm}
	Fix $k,m\in\mathbb{Z}^+$ and $\epsilon>0$. Then there exists $N=N(k,m,\epsilon)$ such that, for any group $G$ of order $n>N$ and any subset $A$ of the multiplication table of $G$ of density at least $\epsilon$, $A$ contains either a subgrid isomorphic to the addition table of $[k]$ as a subset of $\mathbb{Z}_K$ for some $K\ge k$, or a subgrid isomorphic to the addition table of $\mathbb{Z}_p^m$ for some $p<k$ prime.
\end{theorem}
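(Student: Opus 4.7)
\emph{Reduction to the abelian case.} My plan is to first reduce from an arbitrary group $G$ to a dense subset of the addition table of an abelian group. Pyber's theorem guarantees that any finite group of order $n$ contains an abelian subgroup of order at least $2^{c\sqrt{\log n}}$ for some absolute constant $c>0$; in particular, for $n$ sufficiently large in terms of $k$, $m$, $\epsilon$, I may choose an abelian subgroup $H\le G$ with $|H|$ exceeding any pre-specified threshold. The rectangles $(xH)\times(Hy)$, as $x$ and $y$ range over coset representatives, partition $G\times G$ into $|G/H|^2$ blocks of size $|H|^2$. The label at $(xh_1,h_2y)$ equals $xh_1h_2y$, which depends only on $h_1h_2\in H$ because $H$ is abelian, and $h\mapsto xhy$ is injective; each block is therefore isomorphic, as a labelled subgrid, to the addition table of $H$. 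Averaging supplies $x,y$ so that the induced subset $A'\subset H\times H$ of the addition table of $H$ has density at least $\epsilon$, reducing the problem to the abelian case.

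\emph{Cyclic configurations via multidimensional Szemer\'edi.} Writing $H=\bigoplus_p H_p$ and decomposing each Sylow subgroup into invariant factors, a pigeonhole argument will show that, provided $|H|$ is sufficiently large in terms of $k$, $m$, $\epsilon$, at least one of the following three situations must occur: $H$ contains (I) a cyclic subgroup $\mathbb{Z}_N$ with $N$ exceeding a threshold $N_1(k,\epsilon)$, (II) for some prime $p\ge k$, an elementary abelian subgroup $\mathbb{Z}_p^{r_1}$ with $r_1$ exceeding a threshold $r_1(k,\epsilon)$, or (III) for some prime $p<k$, an elementary abelian subgroup $\mathbb{Z}_p^{m_1}$ with $m_1$ exceeding a threshold $m_1(k,m,\epsilon)$. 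In situations (I) and (II), a further coset-averaging step inside $H$ yields a dense subset of the addition table of $\mathbb{Z}_N$ or of $\mathbb{Z}_p^{r_1}$, and the two-dimensional Furstenberg--Katznelson theorem then supplies a configuration $\{(a+id,b+jd):i,j\in[k]\}$ in this dense set for some nonzero $d$. Since the row set of this configuration must contain $k$ distinct elements, $d$ has order at least $k$, and the labels $a+b+(i+j)d$ exhibit the subgrid as the addition table of $[k]$ inside $\langle d\rangle\cong\mathbb{Z}_K$ with $K\ge k$, matching the first conclusion.

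\emph{Elementary abelian configuration via density Hales--Jewett; the main obstacle.} The hard case is situation (III), where I have a dense $A''\subset\mathbb{Z}_p^{m_1}\times\mathbb{Z}_p^{m_1}$ with $p<k$ and need to find $(a_0,b_0)$ together with a subspace $V\cong\mathbb{Z}_p^m$ such that $(a_0+v,b_0+w)\in A''$ for \emph{every} pair $v,w\in V$. A naive application of the multidimensional density Hales--Jewett theorem only produces a single combinatorial $2m$-subspace contained in $A''$, which lacks the required product structure. The trick I would use is to view $\mathbb{Z}_p^{m_1}\times\mathbb{Z}_p^{m_1}$ as $([p]\times[p])^{m_1}=[p^2]^{m_1}$, bundling the two coordinates into a single symbol of an alphabet of size $p^2$; under this identification, a combinatorial $m$-subspace of $[p^2]^{m_1}$ pulls back to precisely a product subspace of the form $(a_0,b_0)+V\times V$. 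The multidimensional density Hales--Jewett theorem applied at alphabet size $p^2$ therefore delivers the required configuration once $m_1$ is chosen large enough in terms of $p$, $m$ and $\epsilon$, and the labels $a_0+b_0+v+w$ identify the subgrid with the addition table of $\mathbb{Z}_p^m$.
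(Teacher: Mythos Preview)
Your plan is correct and matches the paper's proof almost step for step: Pyber to pass to a large abelian subgroup, coset averaging to land in its addition table, multidimensional Szemer\'edi when there is a large cyclic factor, and the identification of $(\mathbb{Z}_p^{m_1})^2$ with $[p^2]^{m_1}$ so that a combinatorial subspace from density Hales--Jewett becomes a product affine subspace---this last trick is exactly the paper's Corollary~\ref{mDHJ}. One small wobble: in your case~(II) you invoke ``two-dimensional Furstenberg--Katznelson'' for a dense subset of $(\mathbb{Z}_p^{r_1})^2$ with $p$ bounded and $r_1$ large, but multidimensional Szemer\'edi as usually stated needs the cyclic modulus to grow; the paper avoids this by folding your case~(II) into the DHJ case (find the full $\mathbb{Z}_p^m$ table first, then observe that $p\ge k$ already yields the $[k]$ table inside a single $\mathbb{Z}_p$), and your own case~(III) argument handles it the same way.
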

\begin{remark}\label{rem}
	This result is `best possible' in terms of finding configurations with many edges spanned by few vertices, since if $A$ is simply taken to be the addition table of $[n/2]$ as a subset of $\mathbb{Z}_n$, say, then any subgrid of $A$ is isomorphic to part of a larger addition table and we cannot improve on the first case of the theorem. Similarly, if $A$ is simply the addition table of $\mathbb{Z}_p^t$ for small $p$ and large $t$ then we cannot improve on the second case.
\end{remark}

\section{Proof of Theorem~\ref{thm}}

We start by introducing the arithmetic machinery that we use later. We begin with a multidimensional version of Szemer\'edi's theorem~\cite{multiS}.

\begin{theorem}[Multidimensional Szemer\'edi Theorem]\label{mST}
	Let $k,t\in\mathbb{Z}^+ $ and let $\epsilon>0$. Then there exists $N=N(\epsilon, k, t)$ such that for any $n>N$ and any $A\subset \mathbb{Z}_n^t$	of density at least $\epsilon$, we can find $a_1,a_2,\dots,a_t,d\in \mathbb{Z}_n$ such that
	$$(a_1+i_1d,a_2+i_2d,\dots,a_t+i_td)\in A$$
	for each $i_j\in \{0,\dots,k-1\}$. In other words, $A$ contains the Cartesian product of $t$ arithmetic progressions of length $k$ with the same common difference.
\end{theorem}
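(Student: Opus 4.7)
The plan is to deduce Theorem~\ref{mST} from the hypergraph removal lemma of Gowers and of Nagle--R\"odl--Schacht--Skokan (Tao's simultaneous version also suffices). The general strategy, which goes back to the triangle-removal proof of the corners theorem by Ajtai and Szemer\'edi, is to encode the desired grid configurations $\{(a_1+i_1d,\dots,a_t+i_td) : i \in [k]^t\}$ as copies of a fixed template hypergraph $F$ inside an auxiliary hypergraph $H$ built from $A$, so that non-existence of such grids becomes an $F$-freeness statement.

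First I would set up the encoding. Introduce $t+1$ ``slicing'' variables, one for $d$ and one for each $a_j$, together with enough redundant linear forms that any $k^t-1$ of the $k^t$ grid points, together with a small amount of auxiliary data, determine the remaining point. This allows the $k^t$ grid points to sit as vertices of a multipartite hypergraph whose edges record the linear relations forced by membership in $A$. Crucially, the degenerate choice $d=0$ collapses a grid to a single point of $A$, so each element of $A$ contributes a ``trivial'' copy of $F$, and these trivial copies are essentially edge-disjoint across the relevant layers of $H$; there are roughly $\epsilon n^{t+1}$ of them.

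Next I would apply the hypergraph removal lemma: any hypergraph containing only $o(n^{v(F)})$ copies of $F$ can be made $F$-free by deleting only $o(n^{|E|})$ edges in each uniformity. If Theorem~\ref{mST} failed for some $\epsilon>0$, then there would be a sequence of counterexamples $A_n$ with no non-trivial grid, hence with only the $O(n^{t+1})$ trivial copies of $F$, which is subpolynomial compared to $v(F)$. The removal lemma would then let us destroy all copies of $F$ within a negligible edge budget. However, the trivial copies are edge-disjoint enough that destroying them all requires deleting $\Omega(\epsilon n^{t+1})$ edges, giving a contradiction for $n$ sufficiently large.

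The main obstacle is the combinatorial bookkeeping in the encoding: one must choose the linear forms so that (i) copies of $F$ in $H$ are in bijection with legitimate grids, (ii) the trivial copies arising from $d=0$ are edge-disjoint in the precise sense required to obstruct the removal conclusion, and (iii) the uniformity of $F$ matches the hypothesis of the removal lemma invoked. An alternative route bypasses this bookkeeping entirely: one could instead appeal to the Furstenberg--Katznelson multiple recurrence theorem via the Furstenberg correspondence principle, obtaining the result as a direct consequence of ergodic-theoretic recurrence for commuting measure-preserving transformations; this was historically the first proof. Either way, the combinatorial content lies in a single deep input, and once that input is granted the deduction is routine.
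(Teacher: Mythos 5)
You should know at the outset that the paper contains no proof of Theorem~\ref{mST} to compare against: it is imported as a black box, cited to Furstenberg and Katznelson~\cite{multiS}, which is exactly the ergodic-theoretic ``alternative route'' you mention at the end. So your fallback coincides with the paper's treatment, while your primary route --- deducing the theorem from hypergraph removal (Gowers, Nagle--R\"odl--Schacht--Skokan, Tao, following Solymosi's scheme for corners) --- is a genuinely different and equally legitimate source. Since the note only ever uses the statement as a black box, either deep input serves; the removal route buys a purely finitary, combinatorial proof, while the ergodic route is the historical original and is what the paper actually leans on.

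That said, the quantitative details you commit to in the removal route are internally inconsistent, and the inconsistency sits exactly where you locate ``the main obstacle.'' In this type of argument each of the $k^t$ membership conditions ``grid point lies in $A$'' must be carried by one edge of the template $F$, so $F$ needs at least $k^t$ edges; but a template on $t+1$ vertex classes has at most $2^{t+1}$ edges, which is too few already for modest $k$. Worse, your own counts defeat the removal lemma: if $v(F)=t+1$ and the trivial copies number $\Theta(\epsilon n^{t+1})$, then the total number of copies is \emph{not} $o(n^{v(F)})$, so the lemma's hypothesis is never satisfied. The standard repair is to first pull $A$ back under the linear map $\phi(x)=-\sum_j x_j v_j$, where $v_1,\dots,v_{k^t}$ enumerate $[k]^t$ with $v_1=0$; a corner $\{x,\,x+de_1,\dots,x+de_M\}$ with $M=k^t-1$ in the (still dense) pullback set $B$ then maps to a grid in $A$. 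One takes $M+1=k^t$ vertex classes given by the hyperplane families $\{x_i=c\}$ for $i=1,\dots,M$ and $\{\textstyle\sum_i x_i=c\}$, uniformity $M$, and template $K_{M+1}^{(M)}$. Trivial copies correspond bijectively to points of $B$, hence number $\Theta(\epsilon n^{M})$, and they are edge-disjoint because any $M$ hyperplanes from distinct families meet in a single point; the deletion budget is $o(n^{M})$, and only now does the contradiction close. So the strategy is sound, but the numbers you wrote down do not correspond to any working encoding, and no choice of ``redundant linear forms'' will make $t+1$ classes suffice. One last point: as printed in the paper the theorem is vacuous, since $d=0$ satisfies the conclusion for any nonempty $A$; your sketch rightly treats $d=0$ as degenerate, but the statement you are actually proving (and the one the paper's application to subgrids needs) must explicitly demand $d\neq 0$.
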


We shall also need a multidimensional version of the density Hales--Jewett theorem~\cite{dHalesJewett}. We recall the definition of a combinatorial line.

\begin{definition}
	A \emph{combinatorial line} in $\mathbb{Z}_m^n$ is a set $U$ of the form $$U=\{(x_1,\dots,x_n) \,| \, x_i\text{ constant on } I, \, x_j=z_j\text{ for }j\not\in I\}$$
	for some indexing set $I\subset \{1,\dots,n\}$ and some $z\in \mathbb{Z}_m^n$. A \emph{combinatorial subspace of dimension $k$} is a set $U$ of the form $$U=\{(x_1,\dots,x_n) \,| \, x_i\text{ constant on each } I_s, \, x_j=z_j\text{ for }j\not\in \cup_s I_s\}$$
		for some collection of $k$ disjoint indexing sets $I_s\subset \{1,\dots,n\}$, and some $z\in \mathbb{Z}_m^n$.
\end{definition}

\begin{theorem}[Density Hales--Jewett]\label{DHJ}
	Fix $m\in\mathbb{Z}^+$ and let $\epsilon>0$. Then there exists $N=N(\epsilon, m)$ such that for any $n>N$ and any $A\subset \mathbb{Z}_m^n$	of density at least $\epsilon$, we can find a combinatorial line inside $A$.
\end{theorem}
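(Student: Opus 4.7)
The plan is to prove the theorem by a density-increment strategy, following the combinatorial approach of the Polymath project (the original proof of Furstenberg--Katznelson proceeds via an ergodic correspondence, which would also be viable but less self-contained). Assume for contradiction that $A \subset \mathbb{Z}_m^n$ has density at least $\epsilon$ and contains no combinatorial line. The engine of the proof is a single step showing that under this assumption one can locate a combinatorial subspace $V \subset \mathbb{Z}_m^n$ of sufficiently large dimension on which the relative density of $A$ in $V$ is at least $\epsilon + \delta(\epsilon,m)$ for some positive quantity $\delta$. Since densities are bounded by $1$, iterating this step at most $\lceil 1/\delta\rceil$ times forces a contradiction, provided the initial $n$ was chosen large enough relative to the iteration depth.

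The increment step has two parts. First, a correlation lemma: if $A$ is line-free then $\mathbbm{1}_A$ has nontrivial correlation with a function that is \emph{insensitive} to the interchange of two symbols, say $m-1$ and $m$. The intuition is that the existence of a combinatorial line between those two symbols inside $A$ is precisely the statement that one can flip a chosen block of coordinates from value $m-1$ to value $m$ and remain in $A$; absence of such lines must be registered by an $(m-1,m)$-insensitive obstruction. Second, a subspace-extraction step: the coloring Hales--Jewett theorem, applied iteratively and quantitatively, allows one to find inside $\mathbb{Z}_m^n$ a combinatorial subspace on which any bounded collection of insensitive functions is nearly constant. Combining these, the correlation forces a density jump on the extracted subspace, where the near-constant insensitive function must be close to a value exceeding the global density by the correlation gap.

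The hardest aspect is the correlation lemma, which is the genuine combinatorial core of the argument. To make it quantitative one should work not with the uniform measure on $\mathbb{Z}_m^n$ but with the \emph{equal-slices measure}, which behaves well under restriction to combinatorial subspaces (under restriction, the uniform measure does not pull back to uniform, whereas equal-slices essentially does); one proves the density increment in that measure and then transfers back. A further delicate point is bookkeeping across the induction: each iteration passes to a subspace of substantially smaller dimension, so one must begin with $n$ large enough that after $1/\delta$ rounds the ambient dimension still suffices to run the extraction step. This forces $N(\epsilon,m)$ to grow extremely quickly in $1/\epsilon$, but qualitatively the argument closes.
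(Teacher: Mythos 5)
The first thing to note is that the paper does not prove this statement at all: Theorem~\ref{DHJ} is quoted as a black box with a citation to Furstenberg--Katznelson, and the only things the paper proves in its vicinity are the easy deductions (Corollary~\ref{DHJ2} and Corollary~\ref{mDHJ}). So your proposal has to be judged as a standalone proof of the density Hales--Jewett theorem, and as such it does not close. What you have written is an outline of the Polymath density-increment strategy in which the two statements carrying all of the mathematical content --- the correlation lemma and the subspace-extraction step --- are asserted rather than proved. Iterating a density increment until the density exceeds $1$ is the routine part; you yourself identify the correlation lemma as ``the genuine combinatorial core'' and then give only the intuition for it. An outline whose core lemmas are unproven is a table of contents for the Polymath paper, not a proof.

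Beyond incompleteness, the extraction step as you describe it would fail. The coloring Hales--Jewett (Graham--Rothschild) theorem does produce a single combinatorial subspace on which any fixed finite collection of insensitive sets is constant, but a single such subspace is useless for a density increment: $A$ may have density zero on it, and the global correlation $\E[\mathbbm{1}_A f]\ge \epsilon\,\E[f]+\delta$ transfers no information to one particular subspace. What the argument actually needs is that the structured set can be \emph{almost entirely partitioned} into combinatorial subspaces of large dimension, so that averaging over the cells of the partition locates a cell with increased density. Proving that partition lemma is where the real work lies, and in the Polymath proof it is done by induction on the alphabet size $m$, invoking the (multidimensional) density Hales--Jewett theorem for alphabet $m-1$; this induction on $m$ is entirely absent from your sketch, and without it neither the partition step nor the quantitative correlation step can be established. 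A further inaccuracy: for $m\ge 3$ the correlation target cannot be a single $(m-1,m)$-insensitive function; already for $m=3$, line-freeness forces correlation with an \emph{intersection} of insensitive sets (a $13$-insensitive set intersected with a $23$-insensitive set, built from the maps that replace every symbol $3$ by $1$, respectively by $2$), and handling such intersections is precisely what makes the partition lemma and the induction on $m$ unavoidable.
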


The density Hales--Jewett theorem easily implies its own multidimensional variant -- for a proof, see~\cite{DKT} for example.

\begin{corollary}[Multidimensional density Hales--Jewett]\label{DHJ2}
	Let $m,k$ be fixed positive integers and let $\epsilon>0$. There exists $N=N(\epsilon, m,k)$ such that for any $n>N$ and any $A\subset \mathbb{Z}_m^n$	of density at least $\epsilon$, we can find an entire combinatorial subspace of dimension $k$ inside $A$.
\end{corollary}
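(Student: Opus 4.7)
The plan is to deduce Corollary~\ref{DHJ2} from Theorem~\ref{DHJ} by the standard block-alphabet trick: a combinatorial line over the enlarged alphabet $\mathbb{Z}_m^k$ corresponds precisely to a $k$-dimensional combinatorial subspace over the original alphabet $\mathbb{Z}_m$. Fixing $m$, $k$ and $\epsilon$, let $N_0 = N_0(\epsilon, m^k)$ be the quantity supplied by Theorem~\ref{DHJ} applied with $m$ replaced by $m^k$, and take $N = kN_0 + k$.

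Let $A \subset \mathbb{Z}_m^n$ have density at least $\epsilon$ with $n \ge N$, and write $n = kN' + r$ for some $N' \ge N_0$ and $0 \le r < k$. By an averaging argument, I can fix the last $r$ coordinates to some $u \in \mathbb{Z}_m^r$ so that the slice $A' = \{x \in \mathbb{Z}_m^{kN'} : (x,u) \in A\}$ has density at least $\epsilon$ in $\mathbb{Z}_m^{kN'}$. Next I identify $\mathbb{Z}_m^{kN'}$ with $(\mathbb{Z}_m^k)^{N'}$ by grouping the $kN'$ coordinates into $N'$ consecutive blocks of size $k$, and view $A'$ as a density-$\epsilon$ subset of the word space $\Sigma^{N'}$ over the alphabet $\Sigma = \mathbb{Z}_m^k$ (of size $m^k$). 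Theorem~\ref{DHJ} then yields a combinatorial line $L \subset A'$, given by a nonempty wildcard set $J \subset \{1,\dots,N'\}$ together with fixed symbols $z_j \in \Sigma$ for $j \notin J$.

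It remains to unfold $L$ into a $k$-dimensional combinatorial subspace of $\mathbb{Z}_m^n$. A point of $L$ is parameterised by a wildcard value $w = (w_1, \dots, w_k) \in \mathbb{Z}_m^k$: in the $j$-th block (for $j \in J$) its $s$-th coordinate is $w_s$, for $j \notin J$ its block is fixed to $z_j$, and the last $r$ coordinates equal $u$. Setting $I_s = \{(j-1)k + s : j \in J\}$ for $s = 1,\dots,k$ produces $k$ pairwise disjoint nonempty subsets of $\{1,\dots,n\}$, and as $w_s$ ranges independently over $\mathbb{Z}_m$, the set $L$ is exactly the combinatorial subspace of dimension $k$ with index sets $I_1,\dots,I_k$ and the specified fixed coordinates off $\bigcup_s I_s$. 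Since $L \subset A$ by construction, this is the desired subspace. There is no real obstacle here, only the bookkeeping needed to confirm that the bijection between lines over the block alphabet $\mathbb{Z}_m^k$ and $k$-dimensional subspaces over $\mathbb{Z}_m$ matches the paper's definition; all the substantive content is contained in Theorem~\ref{DHJ}.
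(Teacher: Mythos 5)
Your proof is correct, and it is essentially the argument the paper has in mind: the paper gives no proof of Corollary~\ref{DHJ2}, deferring to the reference~\cite{DKT}, where the multidimensional statement is deduced from Theorem~\ref{DHJ} by exactly this block-alphabet identification (a line over the alphabet $\mathbb{Z}_m^k$ unfolding to a $k$-dimensional subspace over $\mathbb{Z}_m$). Indeed, the paper itself signals this: its proof of Corollary~\ref{mDHJ} uses ``the same idea used to extend from Theorem~\ref{DHJ} to Corollary~\ref{DHJ2}'', namely the identification of $(\mathbb{Z}_p^n)^t$ with $\mathbb{Z}_{p^t}^n$, which is your trick run in reverse.
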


We will need a further variant of density Hales--Jewett, which follows easily from Corollary~\ref{DHJ2} by applying the same idea used to extend from Theorem~\ref{DHJ} to Corollary~\ref{DHJ2}.

\begin{corollary}\label{mDHJ}
	Let $k,t$ be fixed positive integers, $p$ a fixed prime, and let $\epsilon>0$. There exists $N=N(\epsilon, p, k, t)$ such that for any $n>N$ and any $A\subset (\mathbb{Z}_p^n)^t$ of density at least $\epsilon$, we can find a subspace $\Gamma$ of dimension $k$ and $a_1,\dots,a_t\in \mathbb{Z}_p^n$ such that 
	$$(a_1+\Gamma)\times (a_2+\Gamma) \times \dots \times (a_t+\Gamma) \subset A.$$
\end{corollary}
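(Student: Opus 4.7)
The plan is to run the same repackaging trick that derives Corollary~\ref{DHJ2} from Theorem~\ref{DHJ}, but with the roles of the two indices of $(\mathbb{Z}_p^n)^t$ swapped. Identify $(\mathbb{Z}_p^n)^t$ with $(\mathbb{Z}_p^t)^n$ via the transpose bijection: a point $y=(y_1,\dots,y_t)\in(\mathbb{Z}_p^n)^t$ corresponds to $x=(x_1,\dots,x_n)\in(\mathbb{Z}_p^t)^n$ through the rule $x_i(r)=y_r(i)$. Under this bijection $A$ becomes a density-$\epsilon$ subset of the $n$-cube over the alphabet $\mathbb{Z}_p^t$, whose size is $p^t$. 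Since the density Hales--Jewett theorem depends only on the cardinality of the alphabet, I would apply Corollary~\ref{DHJ2} with parameters $m=p^t$ and $k$: for $n\ge N(\epsilon,p^t,k)$ obtained from that corollary, $A$ contains an entire combinatorial subspace $U$ of dimension $k$ inside $(\mathbb{Z}_p^t)^n$.

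Next, unpack $U$ back into the product form required. By definition $U$ is specified by disjoint index sets $I_1,\dots,I_k\subset\{1,\dots,n\}$, frozen symbols $z_j\in\mathbb{Z}_p^t$ for $j\notin\bigcup_s I_s$, and a free constant $c_s\in\mathbb{Z}_p^t$ assigned on each $I_s$. Reading this in $(\mathbb{Z}_p^n)^t$ coordinates, the $r$-th component of a generic point of $U$ is
$$y_r \;=\; a_r \;+\; \sum_{s=1}^{k} c_s(r)\, e_{I_s},$$
where $e_{I_s}\in\mathbb{Z}_p^n$ is the indicator vector of $I_s$ and $a_r\in\mathbb{Z}_p^n$ records the frozen values $z_j(r)$ on $j\notin\bigcup_s I_s$ and is $0$ on $\bigcup_s I_s$. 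Setting $\Gamma=\mathrm{span}_{\mathbb{Z}_p}(e_{I_1},\dots,e_{I_k})$, which is a genuine $k$-dimensional subspace of $\mathbb{Z}_p^n$ because the $I_s$ are disjoint and $p$ is prime, $y_r$ ranges over the coset $a_r+\Gamma$.

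The crucial point, and the only thing requiring any care, is that the $t$ entries $c_s(1),\dots,c_s(t)$ of $c_s\in\mathbb{Z}_p^t$ are independent coordinates, so as $U$ is swept out the vectors $y_1,\dots,y_t$ vary independently over $a_1+\Gamma,\dots,a_t+\Gamma$ respectively, and the \emph{same} subspace $\Gamma$ appears in each factor. Hence $U$ corresponds to the full product $(a_1+\Gamma)\times\cdots\times(a_t+\Gamma)$, which lies in $A$, as required. There is no genuine obstacle beyond this unpacking: one simply matches the definition of a combinatorial subspace over the alphabet $\mathbb{Z}_p^t$ against the product-of-cosets structure on the group side, using that indicator vectors of disjoint subsets are linearly independent over $\mathbb{Z}_p$. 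The threshold $N$ in Corollary~\ref{mDHJ} can then be taken to be $N(\epsilon,p^t,k)$ from Corollary~\ref{DHJ2}.
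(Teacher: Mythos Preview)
Your argument is correct and follows essentially the same route as the paper: both transpose $(\mathbb{Z}_p^n)^t$ into an $n$-cube over an alphabet of size $p^t$, apply Corollary~\ref{DHJ2}, and read off the product of cosets. The only cosmetic difference is that the paper names the alphabet $\mathbb{Z}_{p^t}$ while you keep it as $\mathbb{Z}_p^t$; your explicit unpacking of the combinatorial subspace via indicator vectors $e_{I_s}$ spells out the step the paper compresses into ``translating back''.
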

\begin{proof}	
	We simply identify $(\mathbb{Z}_p^n)^t$ with $\mathbb{Z}_{p^t}^n$ in the obvious way. We can then apply Corollary~\ref{DHJ2} to find a combinatorial subspace of dimension $k$ inside $A$, which gives us an affine subspace of dimension $k$. The result follows by translating back to $(\mathbb{Z}_p^n)^t$.
\end{proof}

Lastly, we will need Pyber's theorem~\cite{Pyber} which provides us with a large abelian subgroup of $G$. 

\begin{theorem}[Pyber's Theorem]\label{Pyber}
	There is a universal constant $c>0$ such that any group $G$ of order $n$ contains an abelian subgroup of order at least $e^{c\sqrt{\log(n)}}$.
\end{theorem}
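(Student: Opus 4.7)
The plan is to follow Pyber's strategy from~\cite{Pyber}, reducing the problem to the cases of $p$-groups and (nearly) simple groups. The starting observation is the classical $p$-group bound: any group of order $p^n$ contains an abelian subgroup of order at least $p^{c\sqrt{n}}$. This can be established by induction, choosing a maximal abelian normal subgroup $A$ and analysing the action of $G$ on $A$ via its centraliser. This prototype bound is essentially tight, witnessed by certain $p$-groups built from upper triangular matrices over $\mathbb{F}_p$, so any proof of Theorem~\ref{Pyber} must at a minimum match it.

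Next, I would leverage the structure of the Fitting subgroup $F(G)$, the largest normal nilpotent subgroup. If $|F(G)|\ge n^{1/2}$ (say), then because $F(G)$ is the direct product of its Sylow $p$-subgroups, some Sylow $p$-subgroup of $F(G)$ has order at least $|F(G)|^{1/\omega(|F(G)|)}$, where $\omega$ counts distinct prime divisors. Since $\omega(m)=\mathcal{O}(\log m/\log\log m)$, applying the $p$-group bound to this Sylow subgroup yields an abelian subgroup of the size required by the theorem.

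The harder case is when $F(G)$ is small. Here I would invoke the generalised Fitting subgroup $F^*(G)=F(G)E(G)$, where $E(G)$ is the layer (the central product of the quasisimple components of $G$). The self-centralising property $C_G(F^*(G))\le F^*(G)$ embeds $G/F^*(G)$ into $\mathrm{Out}(F^*(G))$, and Schreier's conjecture (a consequence of the Classification of Finite Simple Groups) tells us that this outer automorphism group is solvable with reasonable control on its order. The quotient $E(G)/Z(E(G))$ is a direct product of nonabelian finite simple groups, and CFSG-based estimates provide abelian subgroups in each simple factor of nontrivial size; diagonal combinations across the factors then build the abelian subgroup we need.

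The main obstacle will be the delicate quantitative bookkeeping: one must argue that the contributions from $F(G)$, from $E(G)$, and from the solvable quotient $G/F^*(G)$ cannot simultaneously be too small to defeat the $e^{c\sqrt{\log n}}$ bound. Pyber's key innovation is an efficient optimisation balancing these three contributions, combining the $p$-group bound with the CFSG-based estimates quantitatively. Any slack in the accounting loses powers of $\log n$ that ruin the exponent, and avoiding this loss is the crux of the argument.
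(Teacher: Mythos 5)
First, note that the paper does not prove this statement at all: Pyber's theorem is imported as a black box from~\cite{Pyber}, and its proof (which relies on the Classification of Finite Simple Groups) is far beyond the scope of this note. So your proposal can only be judged against Pyber's original argument, whose broad architecture --- the $p$-group bound, the Fitting and generalised Fitting subgroups, CFSG input for the simple components, and a quantitative balancing of the contributions --- you have described correctly. But what you have written is a plan rather than a proof: the CFSG-based estimates for abelian subgroups of the quasisimple components and the ``delicate quantitative bookkeeping'' are named, not carried out, and you acknowledge yourself that this balancing is the crux of the argument.

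Moreover, the one step you do claim in quantitative detail fails. If $|F(G)|\ge n^{1/2}$ and you pass to the \emph{largest} Sylow subgroup of $F(G)$, you are only guaranteed a $p$-group of order $|F(G)|^{1/\omega(|F(G)|)}$, and since $\omega(m)$ can be as large as $\Theta(\log m/\log\log m)$, this is only $\exp\bigl(\Theta(\log\log n)\bigr)$, i.e.\ polylogarithmic in $n$. Feeding a group of order $p^a$ with $a\log p=\Theta(\log\log n)$ into the $p$-group bound returns an abelian subgroup of order $\exp\bigl(\Theta(\sqrt{\log\log n})\bigr)$, far short of $e^{c\sqrt{\log n}}$. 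Concretely, if $F(G)$ is cyclic of squarefree order about $\sqrt{n}$ (a product of many distinct primes), your route produces an abelian subgroup of order roughly $\log n$, even though $F(G)$ itself is abelian of order about $\sqrt{n}$. The repair is to exploit the full direct-product structure of the nilpotent group $F(G)=P_1\times\dots\times P_k$: choose an abelian subgroup $A_i\le P_i$ of order at least $p_i^{c\sqrt{a_i}}$ in \emph{each} Sylow factor and take $A_1\times\dots\times A_k$; since $\sum_i \sqrt{a_i}\,\log p_i \ge \sqrt{\log 2}\cdot\sqrt{\textstyle\sum_i a_i\log p_i}$, this product has order at least $\exp\bigl(c'\sqrt{\log|F(G)|}\bigr)\ge \exp\bigl(c''\sqrt{\log n}\bigr)$, which is what your Fitting-subgroup case actually requires.
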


We are now ready to prove Theorem~\ref{thm}.

\begin{proof}[Proof of Theorem~\ref{thm}]
	We begin by applying Theorem~\ref{Pyber}, which states that $G$ contains an abelian subgroup $G'$ of order at least $\exp(c\sqrt{N})$ for some absolute constant $c>0$. In particular, $N'=|G'|$ tends to infinity with $N$.
	
	Note that the multiplication table of $G$ can be partitioned into the Cartesian products of left cosets of $G'$ with right cosets of $G'$. Since $A$ has density at least $\epsilon$ in the full multiplication table $G\times G$, we know that there exists $r,s\in G$ such that $A$ has density at least $\epsilon$ in the Cartesian product $rG'\times G's$. The part of the multiplication table corresponding to this Cartesian product is isomorphic to the addition table of $G'$. Let $A'=A\cap(rG'\times G's)$ be the subset of $rG'\times G's$ of density at least $\epsilon$ obtained from $A$. Note that $G'$ is a finite abelian group, and can therefore be written as a direct product of cyclic groups of prime power order.
	
	Suppose that $G'$ has a cyclic factor $\mathbb{Z}_T$. Then, as above, we can find a subset $A''$ which has density at least $\epsilon$ in a Cartesian product of two cosets of $\mathbb{Z}_T$ in $G$, and this Cartesian product is isomorphic to the addition table of $\mathbb{Z}_T$. Thus $A''$ corresponds to a subset of the $T\times T$ addition table of density at least $\epsilon$. By Theorem~\ref{mST}, if $T>T(k,\epsilon)$ is sufficiently large then we can find a Cartesian product of two arithmetic progressions $(a,a+d,\dots,a+(k-1)d)$ and $(b,b+d,\dots,b+(k-1)d)$ in $A''$. The labels in this subgrid belong to the set $\{a+b,a+b+d,\dots,a+b+2d\}$. Indeed, this subgrid is isomorphic to the addition table $\{0,\dots,k-1\}\times\{0,\dots,k-1\}\subset \mathbb{Z}^2$ and so we are in the first case of the statement of the theorem.
	
	So we are done if $G'$ contains a cyclic factor $\mathbb{Z}_T$ with $T>T(k,\epsilon)$. Therefore we may assume that all factors of $G'$ are cyclic groups with bounded (prime power) order. Since $|G'|$ tends to infinity with $N$, we see that for any positive integer $M$, if $N$ is sufficiently large we may find (by the pigeonhole principle) a cyclic factor $\mathbb{Z}_{p^a}$ which appears to the power $M$. In particular, $G'$ contains $\mathbb{Z}_p^M$ as a subgroup.
	
	As above, we note that this means that we may find $A''\subset A$ which has density at least $\epsilon$ in the Cartesian product of two cosets of $\mathbb{Z}_p^M$ inside $G$, and this product is isomorphic to the multiplication table of $\mathbb{Z}_p^M$. If $M$ is sufficiently large in terms of $m$, then by Corollary~\ref{mDHJ} we can find the complete Cartesian product of $a+\mathbb{Z}_p^m$ and $b+\mathbb{Z}_p^m$ inside $A''$. This complete Cartesian product is isomorphic to the addition table of $\mathbb{Z}_p^m$. If $p\ge k$ then we can find the addition table of $\mathbb{Z}_p$ and we are in the first case of the theorem, and otherwise we have $p<k$ and are in the second case.
\end{proof}

We now see how Theorem~\ref{thm} simplifies Question~\ref{q}. We let $f(t)$ be minimal such that we can find an $(f(t),t)$-configuration in the addition table of $[k]\subset\mathbb{Z}_K$ for any $K\ge k$ sufficiently large compared to $t$. Similarly, for each prime $p$ we let $g_p(t)$ be minimal such that we can find an $(g_p(t),t)$-configuration in the addition table of $\mathbb{Z}_p^m$ for any sufficiently large $m$ (in terms of $t$ and $p$).

\begin{corollary}\label{cormain}
	We have that $F(t)=\max_{p}(f(t),g_p(t))$.
\end{corollary}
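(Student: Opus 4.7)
The plan is to prove $F(t)=\max_p(f(t),g_p(t))$ by matching upper and lower bounds. Both follow in a direct way, the upper bound from Theorem~\ref{thm} and the lower bound from the two extremal constructions singled out in Remark~\ref{rem}; there is no substantive obstacle.

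For the upper bound $F(t)\le \max_p(f(t),g_p(t))$, let $A$ be any subset of density at least $\epsilon$ of the multiplication table of a sufficiently large group $G$. I would apply Theorem~\ref{thm} with $k$ chosen large enough that, by definition of $f(t)$, an $(f(t),t)$-configuration exists inside the addition table of $[k]\subset\mathbb{Z}_K$ for every sufficiently large $K$, and with $m$ chosen large enough that, by definition of $g_p(t)$, a $(g_p(t),t)$-configuration exists inside the addition table of $\mathbb{Z}_p^m$ for each prime $p<k$. Since only finitely many primes lie below $k$, a single $m$ suffices for all of them simultaneously. Theorem~\ref{thm} then produces a subgrid of $A$ of one of the two promised types, and the corresponding definition extracts a configuration with at most $\max(f(t),g_p(t))\le \max_{p'}(f(t),g_{p'}(t))$ vertices.

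For the lower bound $F(t)\ge \max_p(f(t),g_p(t))$, I would invoke the two constructions highlighted in Remark~\ref{rem}. Setting $G=\mathbb{Z}_n$ for large $n$ and letting $A$ be the addition table of $[n/2]\subset \mathbb{Z}_n$ (of density about $1/4$), the defining property of $f(t)$ forces every $t$-edge sub-configuration of $A$ to span at least $f(t)$ vertices, so $F(t)\ge f(t)$. Similarly, for any prime $p$, taking $G=\mathbb{Z}_p^m$ for large $m$ and $A$ the entire multiplication table of $G$ (density $1$), the defining property of $g_p(t)$ yields $F(t)\ge g_p(t)$. Maximising over $p$ gives the lower bound.

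Combining the two inequalities gives equality. The only point requiring care is the simultaneous choice of $k$ and $m$ in the upper-bound step so that the definitions of $f(t)$ and of $g_p(t)$ for every prime $p<k$ are all witnessed; as noted this is painless because the relevant set of primes is finite.
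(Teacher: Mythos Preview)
Your proof is correct and follows essentially the same approach as the paper: the upper bound $F(t)\le\max_p(f(t),g_p(t))$ comes from applying Theorem~\ref{thm} with $k$ and $m$ large enough, and the lower bound from the two extremal families. You spell out the lower bound via the constructions of Remark~\ref{rem}, which the paper leaves as ``Clearly''; note, incidentally, that the paper's own proof appears to have the two inequality signs reversed, and your labelling is the correct one.
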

\begin{proof}
	Clearly $F(t)\le\max_{p}(f(t),g_p(t))$. 
	
	For the other direction, we apply Theorem~\ref{thm} for choices of $k$ and $m$ sufficiently large in terms of $t$. Given a subset $A$ of density at least $\epsilon$ of the multiplication table of some sufficiently large group $G$, we may therefore find a subgrid isomorphic to the entire addition table of $[k]$ as a subset of $\mathbb{Z}_K$ for some $K\ge k$, or a subgrid isomorphic to the entire addition table of $\mathbb{Z}_p^m$ for some $p<k$ prime. If $k$ and $m$ are chosen large enough (in terms of $t$ only), we deduce that $A$ contains either an $(f(t),t)$-configuration or a $(g_p(t),t)$-configuration and so $F(t)\ge \max_{p}(f(t),g_p(t))$.
\end{proof}

We have thus reduced Question~\ref{q} to the problem of finding $f(t)$ and $g_p(t)$. We will devote the next section to tackling this discrete optimisation problem; providing an exact, closed form answer for all $t$ is tricky because of certain divisibility considerations. 
\section{Finding locally dense configurations}\label{opt}

In order to keep the note brief, we will not attempt to give the best possible bounds. We will instead show that $F(t)=\mathcal{O}(\sqrt{t})$, and, because of the connection with Conjecture~\ref{BES}, we will separately confirm that $F(t)\le t+3$ for all $t$.

For the analysis of the discrete optimisation problem arising from Corollary~\ref{cormain}, it simplifies the calculations to try and maximise the number of faces induced by a fixed number $v$ of vertices rather than minimise the number of vertices spanned by a fixed number $t$ of faces. Thus we let $f'(v)$ be the maximal number of faces that can be spanned by a set of $v$ vertices in the addition table of $[k]\subset\mathbb{Z}_K$ for any $K\ge k$ sufficiently large compared to $v$, and observe that if $f'(v)\ge t$ then $f(t)\le v$. Similarly, we let $g_p'(v)$ be the maximal number of faces that can be spanned by a set of $v$ vertices in the addition table of $\mathbb{Z}_p^m$ for any $m$ sufficiently large in terms of $v$ and $p$, and observe that if $g_p'(v)\ge t$ then $g_p(t)\le v$.

\begin{proposition}\label{f}
	We have that $f'(v)\ge (1+o(1))v^2/12$, and therefore $f(t)\le (\sqrt{12}+o(1))\sqrt{t}$.
\end{proposition}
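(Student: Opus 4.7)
My plan is to exhibit an explicit construction attaining the claimed count. Writing $v = 3n + O(1)$ for a positive integer $n$, I work inside $\mathbb{Z}_K$ with $K$ taken much larger than $n$ (so that no modular wraparound occurs in any sum considered) and choose the row, column and label vertex sets to be
\[
R = C = \{0, 1, \dots, n-1\} \quad \text{and} \quad L = \{\lceil n/2 \rceil, \lceil n/2 \rceil + 1, \dots, \lceil n/2 \rceil + n - 1\},
\]
so that $L$ is a central interval of the sumset $R + C = \{0, 1, \dots, 2n-2\}$. This uses exactly $3n$ vertices, all contained in $[k] \subset \mathbb{Z}_K$ for any $k \ge \lceil 3n/2 \rceil$ and $K$ sufficiently large.

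The key calculation is then to count faces. A face is a triple $(a, b, a+b)$ with $a \in R$, $b \in C$ and $a + b \in L$, so the number of faces equals $\sum_{s \in L} r(s)$, where $r(s) := |\{(a,b) \in R \times C : a + b = s\}| = \min(s+1,\, 2n-1-s)$. The multiplicity function $r(s)$ is symmetric around $s = n - 1$, where it peaks at the value $n$, and decreases linearly by $1$ on either side. Summing $r(s)$ over the $n$ consecutive values in $L$ (which are centered on the peak) yields exactly $3n^2/4$ when $n$ is even, and differs from this by only $O(n)$ otherwise. Hence $f'(v) \ge (1+o(1))v^2/12$, and the claimed bound $f(t) \le (\sqrt{12}+o(1))\sqrt{t}$ follows by inverting the relation $f'(v) \ge t \Rightarrow f(t) \le v$ stated just before the proposition.

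There is no genuinely hard step, so the proof will be short. The one subtlety worth flagging is the choice of relative sizes $|R| = |C| = |L| = v/3$ together with the centering of $L$ on the peak of $r(s)$: cruder constructions such as $R = C = L = \{0,\dots,n-1\}$ (which yields only $\binom{n+1}{2} \approx v^2/18$ faces) or $R = C = \{0,\dots,n-1\}$ and $L = \{0,\dots,2n-2\}$ (which yields $n^2 \approx v^2/16$ faces) both fall short of the constant $1/12$, so it really is the balancing of the three set sizes and the centering of $L$ at the peak that deliver the correct constant.
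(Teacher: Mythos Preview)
Your proof is correct and follows essentially the same approach as the paper: both take roughly $v/3$ rows, $v/3$ columns, and the $v/3$ most numerous labels (which you explicitly identify as the central interval of the sumset), yielding $(1+o(1))v^2/12$ faces. The only difference is presentational---the paper sets up the count for general $r$ rows and $s$ labels before specializing to $r=s=\lfloor v/3\rfloor$, whereas you fix this choice from the outset and compute the sum directly.
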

\begin{proof}
	We work in the addition table of $[k]\subset\mathbb{Z}_K$ for $K\ge k\ge v$. Given $r$ rows and $r$ columns, we can optimise the density of our configuration by including the $s$ most numerous labels. The labels in the addition table are constant along falling diagonals. In the worst case, each falling diagonal corresponds to  a different label, in which case the most numerous label occurs $r$ times, the next two most numerous labels occur $r-1$ times each, etc. Therefore, by including the $s$ most numerous labels, we include a total of at least
	$$r+(r-1)+(r-1)+(r-2)+(r-2)+\dots+(r-\lceil (s-1)/2 \rceil)$$
	$$=sr-s(s-1)/4-\frac12\lfloor s/2\rfloor$$
	different faces. The total number of vertices is $2r+s$ so we seek to maximise this expression with respect to the constraint that $2r+s\le v$. Taking $r=\lfloor v/3\rfloor$ and $s=\lceil v/3\rceil$, and noting that $f'(v)$ is an increasing function of $v$, the proposition follows.
\end{proof}

\begin{proposition}\label{g}
	We have that $g_p'(v)\ge (1+o(1))v^2/49$ for all $p$, and therefore $g_p(t)\le (7+o(1))\sqrt{t}$.
\end{proposition}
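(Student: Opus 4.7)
The plan is to mimic Proposition~\ref{f}, this time exploiting the subgroup structure of $\mathbb{Z}_p^m$. I would take $R=C$ to be a subspace $V\subset\mathbb{Z}_p^m$ of size $p^d$ and $L$ to be a subset of $V$. Since $V$ is closed under addition, for any $\ell\in L$ and $r\in R$ the element $c=\ell-r$ automatically lies in $V=C$, so the number of faces is exactly $p^d\cdot|L|$ while the number of vertices used is $2p^d+|L|$. Writing $u=p^d$, the face count $u(v-2u)$ is a downward parabola in $u$, maximised at $u=v/4$ with value $v^2/8$; solving $u(v-2u)\ge v^2/49$ yields an explicit interval around $v/4$ of multiplicative width approximately $22$. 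Since consecutive powers of $p$ differ multiplicatively by $p$, for every prime $p\le 22$ at least one power of $p$ is guaranteed to lie inside this interval, and so the subspace construction immediately delivers $v^2/49$.

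For primes $p\ge 23$ the powers of $p$ are too sparse to always hit the target interval, and I would then switch to the 1-dimensional analogue of Proposition~\ref{f}: embed a short arithmetic progression $[0,k)$ in a 1-dimensional subspace $\langle e\rangle\cong\mathbb{Z}_p$ of $\mathbb{Z}_p^m$ and rerun the analysis of Proposition~\ref{f} inside it. Provided $p$ is large enough that the progression fits in $\mathbb{Z}_p$ without wrap-around (roughly $p\ge 2v/3$), this directly yields $v^2/12$, comfortably larger than $v^2/49$. The main obstacle is the intermediate regime of primes: those large enough that the powers of $p$ can miss the subspace target interval, but small enough that the Proposition~\ref{f}-style interval no longer fits in $\mathbb{Z}_p$ without wrap-around. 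I expect to handle these by a hybrid, taking a subspace $V$ of appropriate size in some coordinates and combining it with a short arithmetic progression in an independent coordinate, tracking multiplicities in the style of Proposition~\ref{f} but uniformly scaled by $|V|$; any small residual slack in the constant is then absorbed into the $(1+o(1))$ factor.

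Once $g_p'(v)\ge(1+o(1))v^2/49$ is in hand, the translation to $g_p(t)\le (7+o(1))\sqrt{t}$ is immediate by inverting: setting $v=(7+o(1))\sqrt{t}$ gives $g_p'(v)\ge(1+o(1))t\ge t$ for all sufficiently large $t$, and hence $g_p(t)\le v$ by the definition of $g_p$.
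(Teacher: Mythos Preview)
Your main construction has a genuine gap: you overlook that $L\subset V$ forces $|L|\le p^d$. When $u=p^d<v/3$ you cannot take $|L|=v-2u$, since $v-2u>u=|V|$; the best you can do is $|L|=u$, giving only $u^2$ faces (this is just the full addition table of $V$). Hence the face count is $u\cdot\min(u,\,v-2u)$, not $u(v-2u)$, and the parabola analysis only applies on the range $u\ge v/3$. Requiring $u\cdot\min(u,v-2u)\ge v^2/49$ therefore gives the interval $u\in[v/7,\,(49+\sqrt{2009})v/196]$, whose multiplicative width is about $3.35$, not $22$. Consequently your subspace construction is guaranteed to work only for $p\le 3$, not for all $p\le 22$.

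This means the ``intermediate prime'' regime you planned to patch with a hybrid is in fact essentially the entire range $5\le p<2v/3$, and the hybrid is the heart of the matter rather than a boundary fix. The paper's proof is precisely such a hybrid: it takes an $a\times a$ array of cosets of $\mathbb{Z}_p^{l}$ inside $\mathbb{Z}_p^{l+1}$ (equivalently, the product of a subspace of size $p^l$ with an arithmetic progression $\{0,\dots,a-1\}$ in one further $\mathbb{Z}_p$ coordinate), obtaining $a^2p^{2l}$ faces on $(4a-1)p^l$ vertices. Choosing $l$ minimal with $3p^{l+1}>v$ and $a$ maximal with $(4a-1)p^l\le v$, and writing $\lambda=v/p^l\in[3,3p)$, the bound
\[
a^2p^{2l}\;\ge\;\max\!\Big(\tfrac{v^2}{\lambda^2},\,\tfrac{(\lambda-3)^2}{16\lambda^2}v^2\Big)\;\ge\;\tfrac{v^2}{49}
\]
follows, with equality in the outer inequality at $\lambda=7$. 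Your sketch of the hybrid is along the right lines, but you would need to actually carry out this computation (and recognise that it covers all small primes, not just a narrow band) to close the gap.
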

\begin{proof}
	We work in the addition table $T$ of $\mathbb{Z}_p^m$ for $m$ large. If $p\ge v/3$ then the construction in the proof of Proposition~\ref{f} finds a configuration in the addition table of $\mathbb{Z}_p$ with $(1+o(1))v^2/12$ faces and so we are done.
	
	Otherwise, let $l$ be minimal such that $3p^{l+1}> v$. For $m$ sufficiently large, $T$ contains a subgrid isomorphic to the multiplication table of $\mathbb{Z}_p^{l+1}$.  We can partition this multiplication table into the Cartesian products of the cosets of $\mathbb{Z}_p^{l}$. These Cartesian products can be arranged into a $p\times p$ grid of blocks ($p^l\times p^l$ subgrids) corresponding to entries of the addition table $\mathbb{Z}_p\times \mathbb{Z}_p$. 
	
	We form our configuration by taking a union of these blocks. Let $v=\lambda p^l$, and so $\lambda\in [3,3p).$ The number $B$ of blocks that we can use is precisely the maximum number of faces induced by $\lfloor \lambda \rfloor$ vertices in the addition table of $\mathbb{Z}_p$. The number of vertices in the resulting configuration will be at most $v$, and the number of edges will $Bp^{2l}=Bv^2/\lambda^2$.
	
	Since $p> \lambda/3$ we could use the construction idea from Proposition~\ref{f}. Unfortunately, we cannot assume that $\lambda$ is large (in which case we could take approximately $\lambda^2/12$ blocks and therefore approximately $v^2/12$ faces) and the worst cases for this construction will in fact be decided by the best options for small $\lambda$. 
	
	In order to minimise the calculation, we will instead simply take an $a\times a$ grid of these blocks, and we shall choose $a$ maximal subject to our constraint on the number of vertices. 
	
	If we take the bottom left $a\times a$ grid of these Cartesian products we obtain a configuration with $ap^l$ rows, $ap^l$ columns and at most $(2a-1)p^l$ labels. The configuration has $a^2p^{2l}$ faces. Taking $a$ maximal so that $4a-1 \le v/p^l=\lambda$, we obtain a configuration $C$ with at most $v$ vertices. 
		
	By the maximality of $a$ we see that $a=\lfloor \lambda/4+1/4\rfloor$ so in particular $a\ge \max(1,\lambda/4-3/4)$. The number of faces of the configuration $C$ is $a^2p^{2l}$ which is therefore at least 
	$$\max\bigg(\frac{v^2}{\lambda^2}, \frac{(\lambda-3)^2}{16\lambda^2}v^2\bigg)$$
	which takes its minimal value of $v^2/49$ when $\lambda=7$.	
%
%
\end{proof}

\begin{remark}
	It is not hard to show that Proposition~\ref{f} is in fact best possible, and $1/12$ is the correct constant in the limit. On the other hand, Proposition~\ref{g} does not give the correct constant. As mentioned in the proof, combining the construction in Proposition~\ref{f} with a careful analysis of small $\lambda$ would allow improvements to be made quite easily. We can also make use of leftover vertices (when $\lambda$ is not an integer, a union of blocks uses only $\lfloor\lambda\rfloor p^l<v$ vertices, leaving some  unused) to interpolate between the constructions for integer values of $\lambda$. Using these techniques we can improve the constant from $1/49$ to $5/64$. However, the calculations are quite involved and the result would still not be the best possible, so we have tried to find a compromise between giving the best bounds that we can and providing a streamlined result. 
\end{remark}

Combining Propositions~\ref{f} and~\ref{g} with Corollary~\ref{cormain} gives the following result.

\begin{corollary}\label{cor}
$F(t)=\mathcal{O}(\sqrt{t})$ (in fact, $F(t)\le (7+o(1))\sqrt{t}$).
\end{corollary}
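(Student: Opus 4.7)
The plan is to derive Corollary~\ref{cor} as a direct combination of Corollary~\ref{cormain} with the two construction Propositions~\ref{f} and~\ref{g}, which together have already done the combinatorial work. There is no new argument needed, merely an assembly of the pieces.

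First I would invoke Corollary~\ref{cormain}, which identifies $F(t)=\max_{p}(f(t),g_p(t))$ and hence reduces the task to bounding $f(t)$ together with a bound on $g_p(t)$ that is uniform over all primes $p$. Next I would quote Proposition~\ref{f} to obtain $f(t)\le (\sqrt{12}+o(1))\sqrt{t}$, and Proposition~\ref{g} to obtain $g_p(t)\le (7+o(1))\sqrt{t}$, valid for every prime $p$. Since $7>\sqrt{12}$, taking the maximum of the two bounds yields
\[
F(t)\le \max\bigl((\sqrt{12}+o(1))\sqrt{t},(7+o(1))\sqrt{t}\bigr)=(7+o(1))\sqrt{t},
\]
which in particular gives $F(t)=\mathcal{O}(\sqrt{t})$.

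The only point that needs attention in the bookkeeping is the uniformity in $p$. Theorem~\ref{thm} delivers a prime $p<k$, and since $k$ is allowed to grow (in terms of $t$ and $\varepsilon$) when applying Corollary~\ref{cormain}, the prime produced is not \emph{a priori} bounded. Consequently we genuinely need $\sup_{p}g_p(t)=\mathcal{O}(\sqrt{t})$, not merely a prime-by-prime bound. This would be the one potential obstacle, but Proposition~\ref{g} is already stated with a universal constant $1/49$ that does not depend on $p$, so the supremum poses no difficulty and the corollary follows in a single line.
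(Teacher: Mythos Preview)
Your proposal is correct and matches the paper's approach exactly: the paper simply states that the corollary follows by combining Propositions~\ref{f} and~\ref{g} with Corollary~\ref{cormain}, which is precisely what you spell out. Your remark on the uniformity in $p$ is a useful clarification, and indeed Proposition~\ref{g} supplies the needed uniform bound.
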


Therefore, the Brown--Erd\H os--S\'os threshold of $(t+3,t)$ is far below what can be found given the extra group structure. Nevertheless, we will now confirm that we do indeed prove the Brown--Erd\H os--S\'os conjecture in the context of group multiplication tables, which essentially involves checking that sufficiently dense configurations exist for the small values of $t$, as well as for large $t$ as verified by Corollary~\ref{cor}.

\begin{proposition}\label{prop}
	We have that $F(t)\le t+3$ for all $t\ge 3$.
\end{proposition}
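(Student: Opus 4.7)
The plan is to apply Corollary~\ref{cormain}, which reduces the bound to showing $f(t)\le t+3$ and $g_p(t)\le t+3$ for every prime $p$. Since Corollary~\ref{cor} already yields $F(t)=\mathcal{O}(\sqrt{t})$, there is an explicit constant $T_0$ with $F(t)\le t+3$ for all $t\ge T_0$, so I need only verify both inequalities in the finitely many cases $3\le t<T_0$.

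For $f(t)\le t+3$ I would reuse the construction underlying Proposition~\ref{f}: take $r$ consecutive rows and $r$ consecutive columns in the addition table of $[k]\subset\mathbb{Z}_K$ (with $K$ large), and include the $s$ longest diagonal labels, using $v=2r+s$ vertices and producing $rs-\lfloor s^2/4\rfloor$ triples. Choosing $r\approx s\approx v/3$ with $v=t+3$, the continuous bound becomes $(t+3)^2/12\ge t$, which is equivalent to $(t-3)^2\ge 0$ and so holds for every $t$. Integer rounding slack is absorbed by direct inspection of a handful of small $t$.

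For $g_p(t)\le t+3$ I would split on the size of $p$. If $p$ is large enough that an $r\times r$ window of the $\mathbb{Z}_p$-addition table exhibits no wrap-around, then the construction above transfers verbatim inside $\mathbb{Z}_p\subset\mathbb{Z}_p^m$. For smaller $p$ I would use the block construction of Proposition~\ref{g}: take $a$ row-cosets and $a$ column-cosets of $\mathbb{Z}_p^l\subset\mathbb{Z}_p^m$ whose representatives form arithmetic progressions, and include the longest label-cosets. Each block contributes $p^{2l}$ triples on $3p^l$ vertices, and an analogous diagonal count yields the required bound in each parameter regime.

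The main obstacle is the case $p=2$, where the arithmetic progression of cosets modulo $V=\mathbb{Z}_2^l$ collapses to only two distinct label-cosets, so the generic block construction is most wasteful. Here one must verify each small case directly: a single $2\times 2$ aligned block in $\mathbb{Z}_2^m$ yields $4$ triples on $6$ vertices, simultaneously handling $t=3$ and $t=4$, and iteratively enlarging the block or adding partial extensions covers each subsequent small $t$. Since only finitely many cases must be checked, this final verification is routine.
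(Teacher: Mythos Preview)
Your reduction via Corollary~\ref{cormain} is the same as the paper's, but from that point on the paper takes a quite different and more direct route. Rather than invoking Corollary~\ref{cor} to dispose of large $t$ and then promising a finite case-check for small $t$, the paper gives a single incremental construction that works for \emph{every} $t\ge 3$ at once. In the $[k]\subset\mathbb{Z}_K$ case it starts from the three cells $(0,0),(0,1),(1,0)$ (six vertices, three faces) and then adjoins cells $(1,1),(2,0),(2,1),(3,0),\dots$ one at a time, each new cell adding exactly one new vertex and one new face; thus at every step one has a $(t+3,t)$-configuration. For $\mathbb{Z}_p^m$ it runs the same two-row construction inside a copy of $\mathbb{Z}_p$, observes that the final cell $(p-1,1)$ reuses the label $0$ and so drops the count to $(t+2,t)$, and then restarts in a fresh copy of $\mathbb{Z}_p$, the first cell of which costs two new vertices and restores the $(t+3,t)$ balance.

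Your approach is valid in principle, and the inequality $(t+3)^2/12\ge t\iff (t-3)^2\ge 0$ is a nice observation, but most of the actual content is deferred to ``routine'' verifications that you do not carry out: the integer rounding in the $f$-construction, and especially the small-$p$, small-$t$ cases of $g_p$, require exactly the kind of one-cell-at-a-time bookkeeping that the paper's argument supplies. The paper's incremental construction buys uniformity and completeness at no extra cost, and avoids appealing to Corollary~\ref{cor} (and hence to the asymptotic constants in Propositions~\ref{f} and~\ref{g}) altogether.
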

\begin{proof}
	Although much better bounds than $t+3$ are possible for large $t$, it will be most convenient simply to find $(t+3,t)$-configurations in the addition table of $[k]\subset\mathbb{Z}_K$ for $K\ge k$  large, and also in the addition table of $\mathbb{Z}_p^m$ for $m$ large. The result will then follow by Corollary~\ref{cormain}.
	
	For the first case, working in the addition table of $[k]\subset\mathbb{Z}_K$, we note that taking the points in positions $(0,0)$, $(0,1)$, and $(1,0)$ gives the configuration
	$$\begin{matrix} 1&\\0&1\end{matrix}$$
	which has 6 vertices spanning 3 faces. Next, we can include the point in position $(1,1)$, which introduces one new vertex (a new label, $2$) and one new face. Then the point in position $(2,0)$ introduces one new vertex (a new column) and one new face, and then the point in position $(2,1)$ introduces one new vertex (a new label) and one new face. Continuing, we introduce the points in positions $(i,0)$ and $(i,1)$ for each $i$ until we have $t$ faces. At this point we have a configuration with $t$ faces spanning $t+3$ vertices.
	
	In the second case, we are working in the addition table of $\mathbb{Z}_p^m$ for $m$ large. We can use the above argument to find an $(r+3,r)$-configuration for $r$ up to $2p-1$ by taking the bottom two rows, minus the final face, of the multiplication table of some copy of $\mathbb{Z}_p$. When we add in the final point in position $(p-1,1)$ we re-use the label in position $(0,0)$ so we get an $(r+2,r)$-configuration. We can then start again in a new copy of $\mathbb{Z}_p$, including the corresponding points one by one in the same order as before. Our first point introduces two new vertices (a new row and new column) for just one more face, but since we are adding it to an $(r+2,r)$-configuration we get back to an $(r+3,r)$-configuration. Thereafter we add at most one new vertex with every new face. Once we finish the bottom two rows of the next copy of $\mathbb{Z}_p$ we can start again in another copy, and we can continue until we have $t$ faces. At that point we will span at most $t+3$ vertices.
\end{proof}

Conjecture~\ref{BESG} follows immediately from Proposition~\ref{prop}.

\section{Concluding remarks}

We have shown that the Brown--Erd\H{o}s--S\'os conjecture is true for hypergraphs with an underlying group structure, and in fact a much stronger result is possible. We give a bound of $\mathcal{O}(\sqrt{t})$ on the minimum size of a collection of vertices spanned by $t$ edges, which is tight up to the implied constant. Theorem~\ref{thm} provides an explanation for this local density by showing that bounded-size subgrids manifesting an abelian group structure can be found in any dense subset of a group multiplication table.

It is natural to wonder the ability to find many configurations with density beating the Brown--Erd\H{o}s--S\'os threshold is in some way connected to group-like structure. Are there interesting structural constraints weaker than the group axioms that still provide local density beyond the Brown--Erd\H{o}s--S\'os threshold? Or does the existence of many $(r,s)$-configurations with $r$ sufficiently small in terms of $s$ require an underlying group structure?

\section*{Acknowledgements}
The author would like to thank Tim Gowers for several helpful comments.

\end{document}